\newtheorem{theorem}{Theorem}[section]
\newtheorem{lemma}[theorem]{Lemma}
\newtheorem{proposition}[theorem]{Proposition}
\newtheorem{corollary}[theorem]{Corollary}
\theoremstyle{definition}
\newtheorem{example}{Example}
\newcommand{\ie}{i.{}e.{}\xspace} 
\newcommand{\eg}{e.{}g.{}\xspace} 
\newcommand{\bbN}{\mathbb{N}} 
\newcommand{\bbR}{\mathbb{R}} 
\newcommand{\cG}{\mathcal{G}} 
\newcommand{\cR}{\mathcal{R}} 
\newcommand{\cS}{\mathcal{S}} 
\newcommand{\cT}{\mathcal{T}} 
\newcommand{\bd}{\mathbf{d}} 
\newcommand{\bm}{\mathbf{m}} 
\newcommand{\bn}{\mathbf{n}} 
\newcommand{\set}[2]{\left\{#1\,\middle|\,#2\right\}} 
\DeclareMathOperator{\supp}{supp} 
\newcommand{\eqdef}{\mbox{\,\raisebox{0.2ex}{\scriptsize\ensuremath{\mathrm:}}\ensuremath{=}\,}} 
\newcommand{\defn}[1]{{\sl{\color{blue}#1}}} 
\newcommand\svx{source\xspace} 
\newcommand{\dvx}{sink\xspace} 
\begin{document}

\title[Diameter of a subfamily of transportation polytopes]{A note on the diameter of transportation polytopes with prescribed source degrees}

\author{Henning Bruhn-Fujimoto}
\address{\'Equipe Combinatoire et Optimisation, Universit\'e Pierre et Marie Curie, Paris}
\email{bruhn@math.jussieu.fr}
\urladdr{http://www.math.jussieu.fr/~bruhn/}

\author{Guillaume Chapuy}
\address{CNRS \& LIAFA, Universit\'e Paris-Diderot, Paris}
\email{guillaume.chapuy@liafa.jussieu.fr}
\urladdr{http://www.liafa.jussieu.fr/~chapuy/}

\author{Edward D. Kim}
\address{Department of Mathematics, POSTECH, Pohang}
\email{edwardkim@postech.ac.kr}
\urladdr{http://math.postech.ac.kr/~edwardkim/}

\author{Vincent Pilaud}
\address{CNRS \& LIX, \'Ecole Polytechnique, Palaiseau}
\email{vincent.pilaud@lix.polytechnique.fr}
\urladdr{http://www.lix.polytechnique.fr/~pilaud/}

\thanks{
EK was supported by Vidi grant 639.032.917 of the Netherlands Organization for Scientific Research (NWO), and by a BK21 grant of the Korea Research Foundation. VP was partially supported by grant MTM2008-04699-C03-02 and MTM2011-22792 of the Spanish Ministerio de Ciencia e Innovaci\'on, and by a postdoctoral grant of the Fields Institute of Toronto.
}

\begin{abstract}
Brightwell, van den Heuvel and Stougie proved that the diameter of an~$m \times n$ transportation polytope is at most~$8(m+n-2)$, a factor of eight away from the Hirsch Conjecture. This bound was improved to~$3(m+n-1)$ by Hurkens. We investigate diameters for certain classes of transportation polytopes.

\bigskip
\noindent\textsc{Note.}
After the completion of this note, we discovered that the class of transportation polytopes studied in this note was already considered in

\medskip
Michel L.~Balinski. On two special classes of transportation polytopes. \emph{Math. Programming Stud.}, 1:43--58, 1974.

Michel L.~Balinski and Fred J.~Rispoli. Signature classes of transportation polytopes. \emph{Mathematical Programming}, 60(2, Ser.~A):127--144, 1993.

\medskip
These papers contain both refinements of our results and generalizations to more general classes of transportation problems. In view of these papers, this note will not be submitted for publication. \end{abstract}

\maketitle


\section{Introduction}

In~\cite{Santos}, Santos provided the first counterexample to the famous \defn{Hirsch Conjecture}, which asserted that the diameter of the~$1$-skeleton of a~$\delta$-dimensional polytope with~$\phi$ facets is at most~$\phi - \delta$. This bound, which we call the \defn{Hirsch bound}, was however natural and plausible. Although there is little hope to characterize the polytopes which satisfy this bound, it is interesting to investigate families of polytopes for which the bound may hold. The  transportation polytopes constitute one candidate for such a family.

Consider a \defn{supply function}~$\bm : M \to \bbR_{\ge 0}$ on a set~$M$ of~$m$ \defn{\svx{}s} and a \defn{demand function}~$\bn : N \to \bbR_{\ge 0}$ on a set~$N$ of~$n$ \defn{\dvx{}s}. The $m \times n$ \defn{transportation polytope} $P_{\bm,\bn}$ is a convex polytope obtained by intersecting the positive orthant~$\bbR^{M \times N}_{\ge 0}$ with the following~$m+n$ affine hyperplanes: 
\[ P_{\bm,\bn} \eqdef 
\set{(x_{\mu,\nu}) \in \bbR^{M \times N}_{\ge 0}}{
\forall \mu,\; \sum\nolimits_{\nu} x_{\mu,\nu} = \bm(\mu) 
\text{ and }
\forall \nu,\; \sum\nolimits_{\mu} x_{\mu,\nu} = \bn(\nu) 
}.\]
We always assume that~${\sum_\mu \bm(\mu) = \sum_\nu \bn(\nu)}$, such that $P_{\bm,\bn}$ is non-empty. Intuitively, a point of the transportation polytope~$P_{\bm,\bn}$ is an assignment of quantities to be transported between the \svx{}s of~$M$ and the \dvx{}s of~$N$ on each edge of the complete bipartite graph~$K_{M,N}$ such that the total quantity that a \svx~$\mu$ provides corresponds to its supply~$\bm(\mu)$ while the total quantity that a \dvx~$\nu$ receives corresponds to its demand~$\bn(\nu)$. Optimizing transportation costs naturally gives rise to linear optimization problems on transportation polytopes, and thus leads to the question to evaluate the diameter of transportation polytopes.

Any non-empty $m \times n$ transportation polytope has dimension $(m-1)(n-1)$ and at most $mn$ many facets. Consequently, if the Hirsch Conjecture is true for transportation polytopes, the diameter of any $m \times n$ transportation polytope will be at most~${\phi - \delta \le m+n-1}$. In~\cite{BrightwellvdHeuvelStougie} Brightwell, van den Heuvel, and Stougie gave a bound of~$8(m+n-2)$, which was then improved to~$3(m+n-1)$ by Hurkens~\cite{Hurkens}. However, it is not clear so far whether the transportation polytopes satisfy the Hirsch bound or not.

In this note, we study the diameter of a specific subfamily of transportation polytopes. Given a \defn{degree function}~$\bd : M \to \bbN_{\ge 0}$, we consider the $m \times n$ transportation polytope~$P_\bd \eqdef P_{\bm,\bn}$ where the supply and demand functions are defined by~$\bm(\mu) \eqdef 1 + m\bd(\mu)$ and $\bn(\nu) \eqdef m$. (It forces $n = 1 + \sum_\mu \bd(\mu)$, to ensure that total supply equals total demand.) We show that the diameter of~$P_\bd$ does not exceed twice the Hirsch bound. Furthermore, when $\bd\equiv 1$, the polytope~$P_\bd$ is an $m \times (1+m)$ generalized Birkhoff polytope, and we prove that its diameter is precisely given by the Hirsch bound. Along with the diameters of Birkhoff polytopes~\cite{YemelichevKovalevKravtsov} and a recent extension to partition polytopes by Borgwardt~\cite{Borgwardt}, this provides one of the first subfamilies of transportation polytopes with this property.

Our study of the polytope~$P_\bd$ essentially relies on its interesting combinatorial structure: the $1$-skeleton of~$P_\bd$ is (isomorphic to) the pivoting graph on spanning trees of the complete bipartite graph~$K_{M,N}$ where each \svx~$\mu$ has prescribed degree~$1+\bd(\mu)$. We first study independently this family of spanning trees, and relate it later on with the transportation polytope~$P_\bd$.


\section{Bipartite spanning trees with specified \svx degrees}\label{section:trees}

Denote by~$K_{M,N}$ the complete bipartite graph between a set~$M$ of~$m$ \defn{\svx{}s} and a set~$N$ of~$n$ \defn{\dvx{}s}. Fix a function~$\bd : M \to \bbN_{\ge 0}$ and assume that~${n = 1 + \sum_{\mu} \bd(\mu)}$. We consider the set~$\cT_\bd$ of spanning trees of~$K_{M,N}$ where every \svx~$\mu$ has degree~${1 + \bd(\mu)}$. Note that we prescribe the \svx degrees of these trees, but not their \dvx degrees. Any such tree has~$e \eqdef \sum_{\mu} 1 + \bd(\mu)$ edges. The value of~$n$ was chosen so that~$e = m+n-1$, and thus~$\cT_\bd$ is nonempty. We first give a closed formula for the cardinality of~$\cT_\bd$.

\begin{proposition}\label{proposition:count}
The number of trees in~$\cT_\bd$ is precisely
\[ 
|\cT_\bd| = \frac{(n-1)! n^{m-1}}{\prod_\mu \bd(\mu)!}.
\]
\end{proposition}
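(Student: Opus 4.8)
The plan is to count trees in $\cT_\bd$ via a weighted version of the Cayley/Prüfer correspondence, or equivalently via a generating-function / matrix-tree argument, since the quantity $(n-1)!\,n^{m-1}/\prod_\mu \bd(\mu)!$ strongly suggests a Prüfer-type bijection. First I would recall that spanning trees of the complete bipartite graph $K_{M,N}$ are enumerated by a bipartite analogue of the Prüfer code: there is a classical bijection between spanning trees of $K_{M,N}$ and pairs of sequences, one of length $n-1$ over the alphabet $M$ and one of length $m-1$ over the alphabet $N$ (this gives the well-known count $m^{n-1}n^{m-1}$). Under this bijection, the degree of a vertex equals $1$ plus the number of times it appears in the concatenated code; in particular, a \svx $\mu \in M$ has degree $1 + \bd(\mu)$ in the tree if and only if $\mu$ appears exactly $\bd(\mu)$ times in the $M$-sequence of length $n-1$.

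The key step is then purely enumerative: the $N$-sequence of length $m-1$ ranges freely over $N$, contributing a factor $n^{m-1}$; and the $M$-sequence of length $n-1$ must be a word in which each letter $\mu$ occurs exactly $\bd(\mu)$ times. Since $\sum_\mu \bd(\mu) = n-1$ by hypothesis, the number of such words is the multinomial coefficient $\binom{n-1}{(\bd(\mu))_{\mu \in M}} = (n-1)!/\prod_\mu \bd(\mu)!$. Multiplying the two counts gives exactly $(n-1)!\,n^{m-1}/\prod_\mu \bd(\mu)!$, as claimed. A small point to check is the degenerate case where some $\bd(\mu)=0$: then $\mu$ has degree $1$ (a leaf attached to some \dvx) and does not appear in the $M$-sequence, which is consistent with $\bd(\mu)!=1$; and if $m=1$ the formula reads $|\cT_\bd| = (n-1)!/\bd(\mu)!$ with $\bd(\mu)=n-1$, giving a single tree (the star), which is correct.

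The main obstacle is setting up the bipartite Prüfer bijection cleanly and verifying that it tracks \svx degrees in the stated way; this is standard but must be stated precisely enough that the degree bookkeeping is unambiguous. An alternative, should the bijective route feel too heavy for a "note," is to use the weighted matrix-tree theorem: assign variable weight $x_\mu$ to each \svx $\mu$, so that a spanning tree $T$ gets weight $\prod_\mu x_\mu^{\deg_T(\mu)-1}$, compute the resulting generating polynomial $\sum_T \prod_\mu x_\mu^{\deg_T(\mu)-1}$ for $K_{M,N}$ (it factors as $n^{m-1}\bigl(\sum_\mu x_\mu\bigr)^{n-1}$ by a determinant computation), and then extract the coefficient of $\prod_\mu x_\mu^{\bd(\mu)}$, which is again $n^{m-1}(n-1)!/\prod_\mu\bd(\mu)!$. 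Either way the computation is routine once the correct generating identity is in hand; I would present the Prüfer argument as the primary proof since it is the most transparent and self-contained.
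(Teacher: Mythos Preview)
Your argument is correct, but the paper proves the proposition by a different bijective route. Rather than invoking a bipartite Pr\"ufer code, the authors adapt Joyal's proof of Cayley's formula: they fix a distinguished \dvx~$\nu_*$, consider the set~$\cR_\bd$ of trees in~$\cT_\bd$ oriented toward~$\nu_*$ and with an additional marked \dvx, and set up an explicit bijection between~$\cR_\bd$ and a set~$\cS_\bd$ of functional digraphs on~$K_{M,N}$ (each \svx~$\mu$ has in-degree~$\bd(\mu)$, every vertex has out-degree~$1$ except~$\nu_*$). The cardinality of~$\cS_\bd$ is read off directly as $(n-1)!\,n^{m}/\prod_\mu \bd(\mu)!$, and dividing by~$n$ gives the formula.

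Your Pr\"ufer approach is arguably shorter and more transparent once one accepts the degree-tracking property of the bipartite Pr\"ufer bijection; it reduces the count to a single multinomial coefficient times~$n^{m-1}$. The Joyal-style proof, by contrast, is fully self-contained and does not presuppose any Pr\"ufer machinery, at the cost of a slightly more elaborate bijection (opening cycles of a functional digraph and concatenating the pieces along a path). The paper also sketches, after the main proof, a generating-function derivation via Lagrange inversion for the constant-$\bd$ case, which is close in spirit to your matrix-tree alternative. So all three routes---Pr\"ufer, Joyal, and generating functions---are available here; you simply chose a different one from the authors.
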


\begin{proof}
We adapt Andr\'e Joyal's proof of Cayley's formula. We fix a total order~$\prec$ on~$M$ and a special \dvx~$\nu_* \in N$.
Let~$\cR_\bd$ be the set of trees of~$\cT_\bd$ oriented towards \dvx~$\nu_*$ and with a marked \dvx (which may or may not coincide with~$\nu_*$). The number of trees in~$\cR_\bd$ is~$n |\cT_\bd|$. Let~$\cS_\bd$ be the set of directed subgraphs of~$K_{M,N}$ where every \svx~$\mu$ has in-degree~$\bd(\mu)$, and every \svx and \dvx has out-degree~$1$, except~$\nu_*$, which has out-degree~$0$. Since~$\sum_\mu \bd(\mu) = n-1$, the number of graphs in~$\cS_\bd$ is~$\frac{(n-1)! n^{m}}{\prod_\mu \bd(\mu)!}.$ We now define inverse bijections~${\Phi : \cS_\bd \to \cR_\bd}$ and~${\Psi : \cR_\bd \to \cS_\bd}$.

Consider a directed graph~$S$ in~$\cS_\bd$. The connected component~$S_1$ of~$S$ containing \dvx~$\nu_*$ is a tree oriented towards~$\nu_*$. The other connected components~$S_2, \dots, S_p$ of~$S$ are graphs of functions: each is formed by a directed (\svx-\dvx alternating) cycle together with some rooted trees which are oriented towards their roots and glued to the cycle by their roots. For~$k \ge 2$, we denote by~$\mu_k$ the \svx of the cycle of~$S_k$ which is maximal for~$\prec$, and by~$\nu_k$ the \dvx following~$\mu_k$ in this cycle (fix also~$\nu_1 = \nu_*$). By reordering, we can assume without loss of generality that~$\mu_2 \prec \dots \prec \mu_p$. See Figure~\ref{figure:S-decomposition}. To obtain the oriented tree~$\Phi(S)$ from~$S$, we open the cycle in each~$S_k$ by erasing the arc between~$\mu_k$ and~$\nu_k$, we concatenate the resulting trees by adding an arc between~$\mu_k$ and~$\nu_{k-1}$, and we mark \dvx~$\nu_p$. In other words,
\[ \Phi(S) \eqdef (S \setminus \set{(\mu_k,\nu_k)}{k \in \{2,\dots,p\}} ) \cup \set{(\mu_k,\nu_{k-1})}{k \in \{2,\dots,p\}}.
\]
\begin{figure}[hbt]
	\centerline{\includegraphics[scale=.7]{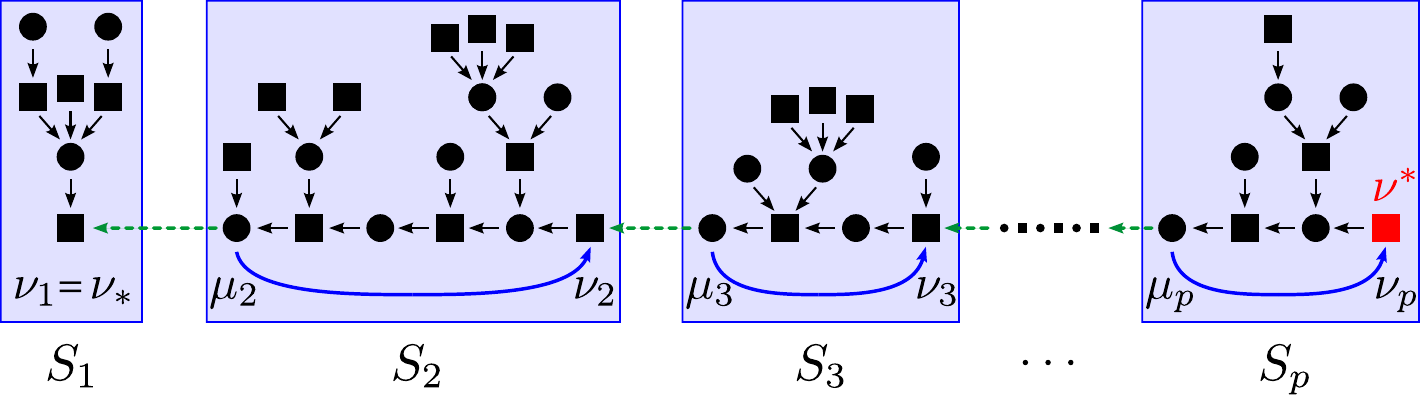}}
	\caption{Schematic decomposition of $S \in \mathcal{S}_\bd$ into $S_1,\dots,S_p$. The map $\Phi$ removes the long blue arcs and inserts the dashed green arcs.}
	\label{figure:S-decomposition}
\end{figure}

Consider an oriented tree~$R$ in~$\cR_\bd$ with marked \dvx~$\nu^*$ (which may or may not be~$\nu_*$). Consider the path from~$\nu_*$ to~$\nu^*$ (note that this path goes against the orientation on~$R$). Denote by $\mu_2$ the \svx\ succeeding $\nu_*$, and inductively choose $\mu_3,\dots,\mu_p$ so that $\mu_i$ is for all~$i$ the first \svx\ on the path that is greater than~$\mu_{i-1}$ with respect to~$\prec$. For~$k \le p-1$, let~$\nu_k$ be the \dvx preceding~$\mu_{k+1}$ along this path, and define~$\nu_p := \nu^*$. See Figure~\ref{figure:R-decomposition}. To obtain the directed graph~$\Psi(R)$ from~$R$, we disconnect~$R$ by erasing the arc between~$\mu_k$ and~$\nu_{k-1}$ and we create cycles by adding an arc between~$\mu_k$ and~$\nu_k$. In other words,
\[ \Psi(R) \eqdef (R \setminus \set{(\mu_k,\nu_{k-1})}{k \in \{2,\dots,p\}} ) \cup \set{(\mu_k,\nu_k)}{k \in \{2,\dots,p\}}.
\]
\begin{figure}[hbt]
	\centerline{\includegraphics[scale=.7]{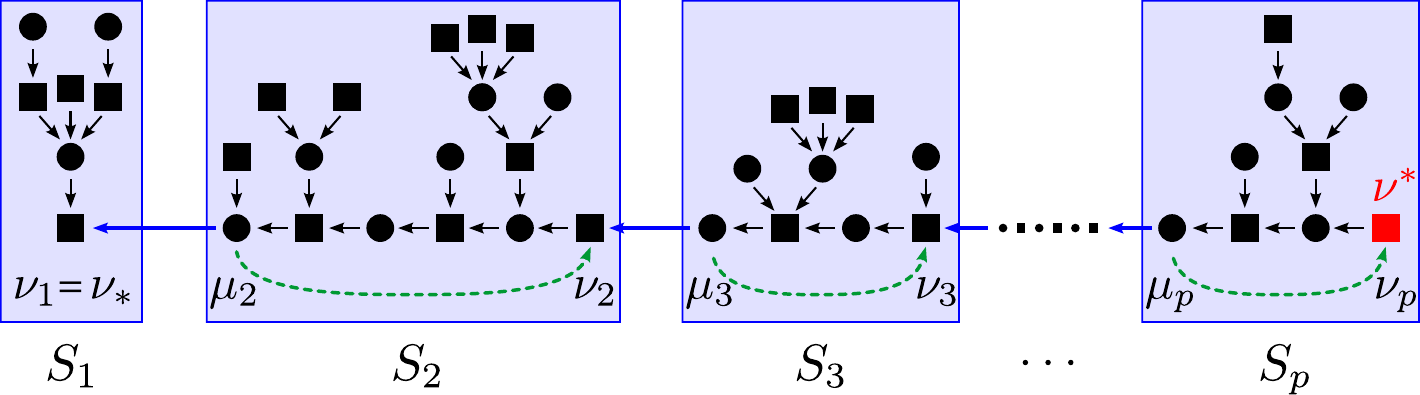}}
	\caption{Schematic depiction of $R \in \mathcal{R}_\bd$ with marked sink $\nu^*$. The map $\Phi$ removes the long blue arcs and inserts the dashed green arcs.}
	\label{figure:R-decomposition}
\end{figure}

It is clear that~$\Phi$ and~$\Psi$ are inverse bijections. We thus obtain that
\[
|\cT_\bd| = \frac{|\cR_\bd|}{n} = \frac{|\cS_\bd|}{n} = \frac{(n-1)!n^{m-1}}{\prod_\mu \bd(\mu)!}.\qedhere
\]
\end{proof}

When~$\bd \equiv d \in \bbN_{\ge 0}$ is constant on~$M$, the number of trees in~$\cT_\bd$ is~$\frac{(n-1)!n^{m-1}}{(d!)^m}$, which recovers a formula of Klee and Witzgall (see~\cite{KleeWitzgall} or~Theorem~2.4 in Chapter~6 of~\cite{YemelichevKovalevKravtsov}). This counting can alternatively be derived from classical generatingfunctionology. For that, consider the collection of all bipartite trees whose \svx{}s are unlabeled and have degree~$d$, whose \dvx{}s are labeled, and which are rooted at an arbitrary \dvx. Denote by~$\theta_d(n)$ the number of such trees with~$n$ \dvx{}s, and let~$\Theta_d(z) \eqdef \sum_n \theta_d(n)z^n/n!$ be the corresponding exponential generating function. According to the dictionary of generating functions~\cite{FlajoletSedgewick}, this function satisfies $\Theta_d(z) = z\zeta_d(\Theta_d(z))$, where $\zeta_d(y) = \exp\left(y^d/d!\right)$. The Lagrange inversion formula ensures that~$\theta_d(n) = n!\,[z^n]\Theta_d(z) = (n-1)!\,[y^{n-1}]\zeta_d(y)^n$, where $[x^p]f(x)$ denotes the coefficient of~$x^p$ in the series expansion of the analytic function~$f(x)$. Consequently, the number of trees of~$\cT_\bd$ when~$\bd \equiv d$ is given by
$$\frac{m!\theta_d(1+md)}{1+md} = \frac{m!(md)!}{1+md}[y^{md}]\zeta_d(y)^{1+md} = \frac{(md)!(1+md)^{m-1}}{(d!)^m} = \frac{(n-1)!n^{m-1}}{(d!)^m}.$$
In fact, this method based on generating functions can also be extended to obtain the formula of Lemma~\ref{proposition:count} for general~$\bd$, using the multivariate version of Lagrange inversion formula~\cite{BenderRichmond}.

\medskip
Let~$T$ be a tree in~$\cT_\bd$ and~$\varepsilon$ be an edge of~$K_{M,N}$ not in~$T$. There is a unique cycle in~$T \cup \{\varepsilon\}$. Let~$\varepsilon'$ be the other edge in this cycle adjacent to the \svx of~$\varepsilon$. Then~$T$ and~$T' \eqdef (T \setminus \{\varepsilon\}) \cup \{\varepsilon'\}$ are the only two trees in~$\cT_\bd$ contained in~$T \cup \{\varepsilon\}$. We say that~$T'$ is obtained from~$T$ by \defn{pivoting on}~$\varepsilon$. Note that this operation removes the edge~$\varepsilon'$ from~$T$.

We consider the \defn{pivoting graph}~$\cG_{\bd}$ whose vertices are the trees in~$\cT_{\bd}$, and whose edges are pairs of trees which differ by pivoting on an edge. We study the diameter of~$\cG_\bd$.

\begin{theorem}\label{theorem:diameter-bound}
The diameter of~$\cG_\bd$ is at most~$2\sum_\mu \bd(\mu) = 2n-2$.
\end{theorem}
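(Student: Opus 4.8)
The plan is to single out one \dvx $\nu_0$, to fix once and for all a \emph{canonical} tree $T_0 \in \cT_\bd$, and to show that every tree in $\cT_\bd$ can be brought to $T_0$ in at most $\sum_\mu \bd(\mu) = n-1$ pivots; the theorem then follows from the triangle inequality, since $\mathrm{dist}(T,T') \le \mathrm{dist}(T,T_0) + \mathrm{dist}(T_0,T') \le 2\sum_\mu\bd(\mu)$. For $T_0$ I would take the ``star of stars'' associated with an arbitrary partition $N\setminus\{\nu_0\} = \bigsqcup_\mu S_\mu$ with $|S_\mu| = \bd(\mu)$, namely the tree with edge set $\set{(\mu,\nu_0)}{\mu\in M}\cup\set{(\mu,\sigma)}{\mu\in M,\ \sigma\in S_\mu}$: it is a spanning tree of $K_{M,N}$ in which \svx $\mu$ has degree $1+\bd(\mu)$, hence lies in $\cT_\bd$.

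\textbf{The method.} To move an arbitrary $T$ to $T_0$ I would \textbf{correct the edges around the \svx{}s one \svx at a time}. The structural fact that makes this viable is that a pivot on an edge incident to a \svx $\mu$ deletes the edge $\varepsilon'$, which is again incident to $\mu$; thus a pivot only alters the star of edges at one \svx, and a \svx whose star already coincides with that of $T_0$ is never disturbed by later pivots at other \svx{}s. Root the current tree at $\nu_0$; to attach a pendant \dvx $\sigma \in S_\mu$ to $\mu$ one pivots on $(\mu,\sigma)$, the forced deletion being $(\mu,\nu'')$, where $\nu''$ is the neighbour of $\mu$ on the $\mu$--$\sigma$ path. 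If the \svx{}s are processed in a suitable order and, within a \svx, the \dvx{}s of $S_\mu$ are attached in a suitable order, one should be able to guarantee that $\nu''$ is always an edge that is ``wrong'' for $T_0$, so that each pivot makes one more edge at $\mu$ correct without spoiling any other, and that the edge $(\mu,\nu_0)$ ends up in place automatically; this costs exactly $\bd(\mu)$ pivots at $\mu$ (one per \dvx of $S_\mu$ not already attached), hence $\sum_\mu\bd(\mu)$ in total.

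\textbf{The main obstacle.} The delicate point is exactly this bookkeeping: one must exhibit a processing order together with an invariant on the intermediate trees --- for instance that, after the first $i$ \svx{}s have been corrected, the current tree consists of the $i$ already-corrected stars attached at $\nu_0$ together with one further pendant subtree, attached at $\nu_0$, carrying all not-yet-placed vertices --- under which (i) the forced deleted edge $\varepsilon'$ of each pivot is always a wrong edge, so that at most $\bd(\mu)$ pivots (rather than $1+\bd(\mu)$ or more) are spent at $\mu$, and (ii) the edge $(\mu,\nu_0)$ is already present when $\mu$ is reached, so that no extra ``re-centring'' pivot is needed; point (ii) is subtle because, as one checks directly, from a tree in which $\nu_0$ already meets every \svx every pivot destroys one such edge, so one cannot simply reach a star-of-stars first and then rearrange. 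The \svx{}s $\mu$ with $\bd(\mu)=0$ are a related nuisance, since they receive no budget and so must already be correctly attached (to $\nu_0$) when their turn comes. Establishing a single invariant that survives all of these cases is the technical heart of the argument; an alternative, in which the factor $2$ in $2\sum_\mu\bd(\mu)$ appears more transparently, is to dispense with a fixed $T_0$ and pass directly from $T$ to $T'$, correcting the star of each \svx $\mu$ with $\bd(\mu)\ge 1$ in at most $1+\bd(\mu)\le 2\bd(\mu)$ pivots while leaving the already-corrected \svx{}s untouched.
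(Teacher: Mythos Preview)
Your main strategy---reducing every tree to the fixed ``star of stars'' $T_0$ in at most $n-1$ pivots---cannot work. Each pivot removes one edge and inserts one, so $|T\cap T_0|$ increases by at most $1$ per pivot, and hence at least $(m+n-1)-|T\cap T_0|$ pivots are needed to reach $T_0$ from $T$. Now take $m=3$, $\bd\equiv 1$ (so $n=4$), with $T_0=\{\mu_1\nu_0,\mu_2\nu_0,\mu_3\nu_0,\mu_1\nu_1,\mu_2\nu_2,\mu_3\nu_3\}$ and
\[
T=\{\mu_1\nu_0,\ \mu_1\nu_2,\ \mu_2\nu_1,\ \mu_2\nu_3,\ \mu_3\nu_1,\ \mu_3\nu_2\}.
\]
One checks that $T\in\cT_\bd$ and $|T\cap T_0|=1$, so at least $5>n-1=3$ pivots are required. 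The obstacle you flagged is therefore not just bookkeeping: the eccentricity of your $T_0$ genuinely exceeds $n-1$, and nothing in your outline suggests a better choice of canonical tree.

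The paper in fact takes your alternative route---pass directly from $S$ to $T$, fixing one \svx at a time---but with two ingredients that your sketch lacks. First, pivots are performed on \emph{both} trees: for each \svx $\mu$ being processed, most of the work is done by pivoting in $T$, but the last step may be a single pivot in $S$; so the argument is a meet-in-the-middle rather than a reduction to a fixed target. Second, and this is the point that makes the invariant go through, the \svx $\mu$ to process next is chosen to have \emph{maximal distance in $S$} from a fixed reference \svx. This forces all \svx{}s in the components $V_1,\dots,V_{\bd(\mu)}$ of $S\setminus\mu$ to be already consistent, so these components induce subtrees of $T$ as well. With that structure in hand, each desired edge $(\mu,\nu_k)$ can be inserted into $T$ in at most two pivots: if the naive pivot on $(\mu,\nu_k)$ would delete the wrong edge at $\mu$, one first detours through another non-consistent \svx $\sigma$ lying on the cycle (such a $\sigma$ must exist, else the cycle would sit inside $S$). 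A short case analysis then shows that the total cost at $\mu$ is at most $2\bd(\mu)$ rather than the naive $2\bd(\mu)+1$. Your proposed budget of $1+\bd(\mu)$ pivots per \svx, by contrast, is not justified, and the degree-zero \svx{}s you worry about are absorbed automatically once both trees are allowed to move.
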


\begin{proof}
Consider two trees $S$ and $T$ in $\cT_\bd$.  We say that a \svx is \defn{consistent} in~$(S,T)$ when all edges incident to it coincide in~$S$ and~$T$. We will prove that we can make all \svx{}s consistent after at most~$2\sum_\mu \bd(\mu)$ pivots in~$S$ and~$T$, which means that the distance between~$S$ and~$T$ in $\cG_\bd$ is at most $2\sum_\mu \bd(\mu)$. We proceed by induction on the number of non-consistent \svx{}s.

Assume that~$S$ and~$T$ are distinct so that there exists at least one non-consistent \svx in~$(S,T)$. We pick any \svx~$\mu_*$ and choose a non-consistent \svx~$\mu$ with maximal distance to~$\mu_*$ in~$S$. Removing~$\mu$ from~$S$ results in~$1+\bd(\mu)$ components. We denote by~$V_0$ the vertex set of the component containing~$\mu_*$ and by~$V_1,\dots, V_{\bd(\mu)}$ the vertex sets of the other components. We also denote by~$\nu_k$ the \dvx\ in~$V_k$ adjacent to~$\mu$ in~$S$. The choice of~$\mu$ ensures that all \svx{}s among~$V_1,\dots, V_{\bd(\mu)}$ are consistent in~$(S,T)$. In particular~$V_1,\ldots,V_{\bd(\mu)}$ induce connected subgraphs of~$T$.

In at most~$2\bd(\mu)$ pivots, we will transform~$S$ into~$\bar S$ and~$T$ into~$\bar T$ such that~$\mu$ becomes consistent in~$(\bar S, \bar T)$ while ensuring that \svx{}s that are consistent in~$(S,T)$ stay consistent in $(\bar S, \bar T)$. 

We will perform our pivots in two phases. For the first phase, let us assume that there is an edge $(\mu,\nu_k)$ of $S$ that is not shared by $T$. The unique cycle in $T\cup\{(\mu,\nu_k)\}$ contains a second edge $(\mu,\nu)$ incident with $\mu$. We distinguish two cases. If $\nu\notin\{\nu_1,\dots,\nu_{\bd(\mu)}\}$ then we perform a pivot on $(\mu,\nu_k)$ in $T$, which removes $(\mu,\nu)$ from~$T$.

\begin{figure}[hbt]
	\centerline{\includegraphics[scale=.7]{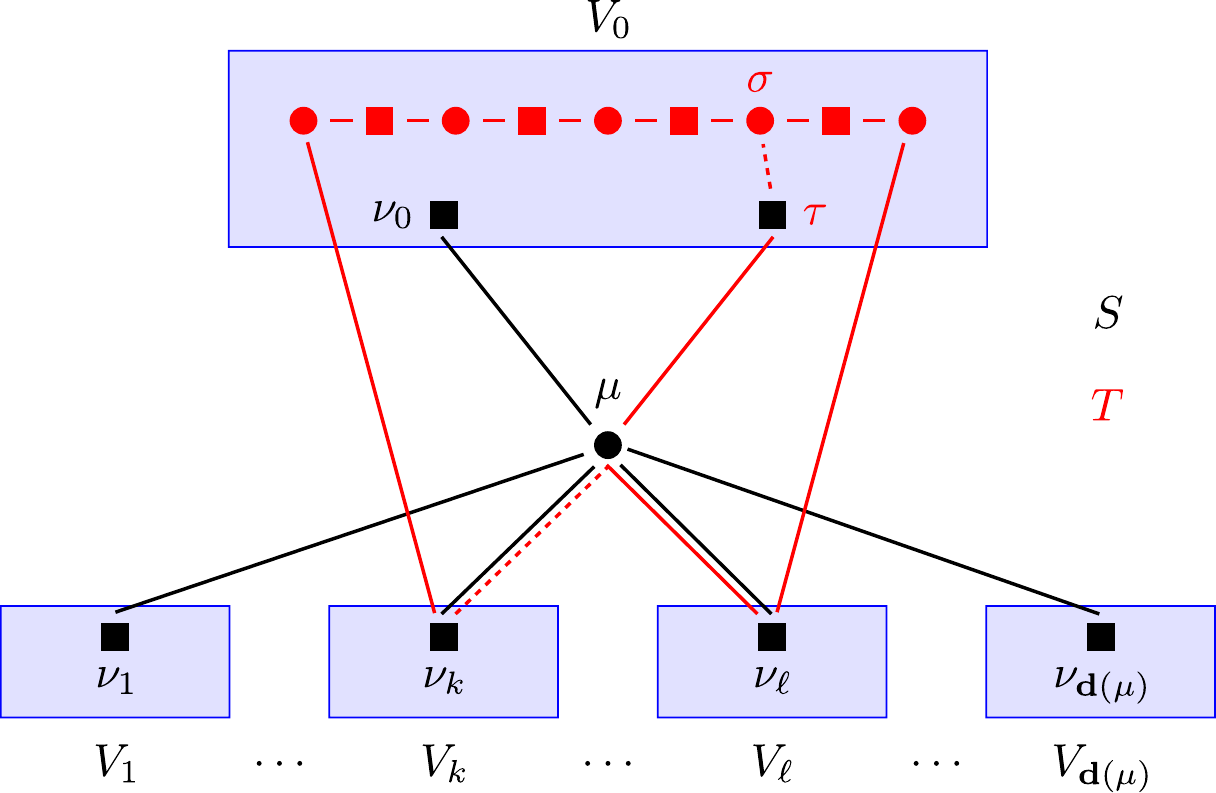}}
	\caption{Phase one pivots on $T$: the cycle in $T \cup (\mu,\nu_k)$ is depicted in red. Pivoting on $(\sigma,\tau)$ in $T$ removes the edge to the right of $\sigma$, producing $T'$. Then pivoting on $(\mu,\nu_k)$ in $T'$ removes $(\mu,\tau)$.}\label{figure:phase1}
\end{figure}

In the other case, when $\nu=\nu_\ell$ for some $\ell\in\{1,\dots,{\bd(\mu)}\}$, we observe that the unique cycle in $T\cup\{(\mu,\nu_k)\}$ contains a non-consistent \svx~$\sigma$ other than~$\mu$; otherwise the cycle would be contained in the tree~$S$. Next, the \svx~$\mu$ has a neighbour $\tau\notin\{\nu_1,\ldots,\nu_{\bd(\mu)}\}$ in $T$. See Figure~\ref{figure:phase1}. We now perform first a pivot on $(\sigma,\tau)$ in $T$. In the tree $T'$ obtained from $T$ by this pivot, we find that inserting the edge $(\mu,\nu_k)$ results in a cycle that contains $(\mu,\tau)$. Consequently, our next pivot, which we perform  on $(\mu,\nu_k)$ in $T'$, removes $(\mu,\tau)$. We continue with this process until all the edges $(\mu,\nu_k)$ lie in $T$. This concludes the first phase. 

After the first phase, either $\mu$ has become consistent or there is exactly one edge left in $T$ that is not shared by $S$. Clearly, the \svx vertex of this edge is~$\mu$, while its \dvx, which we denote by~$\kappa$, is contained in~$V_0$. Then pivoting in~$S$ on~$(\mu, \kappa)$ makes~$\mu$ consistent.

The previous arguments show that the \svx~$\mu$ can be made consistent in at most~$2\bd(\mu) + 1$ pivots: we need at most two pivots to insert~$(\mu, \nu_k)$ in~$T$ for each~${k \ge 1}$, and one final pivot to insert~$(\mu, \kappa)$ in~$S$. To obtain the claimed result, we observe that the current procedure requires~$2\bd(\mu)+1$ pivots only in the case where we start with no edge between~$\mu$ and~$\nu_1,\dots,\nu_{\bd(\mu)}$ in $T$. 
But in this case, we can drop the last~$1$ since at the very beginning, we only need one pivot to insert~$(\mu, \nu_1)$ in $S$.
\end{proof}


\section{Background on transportation polytopes}\label{section:background}

Given a \defn{supply function}~$\bm : M \to \bbR_{\ge 0}$ on a set~$M$ of~$m$ \defn{\svx{}s} and a \defn{demand function}~$\bn : N \to \bbR_{\ge 0}$ on a set~$N$ of~$n$ \defn{\dvx{}s}, the \defn{transportation polytope}~$P_{\bm,\bn}$ has the following inequality description:
\begin{equation}\label{eqn:inequalityDescription}
\forall \mu, \; \forall \nu, \quad x_{\mu,\nu} \ge 0,\quad \sum\nolimits_{\nu'} x_{\mu,\nu'} = \bm(\mu), \quad \text{and} \quad \sum\nolimits_{\mu'} x_{\mu',\nu} = \bn(\nu).
\end{equation}
Again, we assume that~$P_{\bm,\bn}$ is non-empty, \ie that~${\sum_\mu \bm(\mu) = \sum_\nu \bn(\nu)}$. 
Since $P_{\bm,\bn}$ is defined by~$m+n-1$ linearly independent equations in an~$mn$-dimensional ambient space, it has dimension~$\delta \eqdef (m-1)(n-1)$.

From the inequality description~(\ref{eqn:inequalityDescription}), the $m \times n$ transportation polytope~$P_{\bm,\bn}$ has at most~$mn$ facets, each defined by an inequality of the form~$x_{\mu,\nu} \ge 0$. Reciprocally, the following lemma affirms that an inequality $x_{\mu,\nu}$ defines a facet as soon as it is tight.
\begin{lemma}\label{lemma:facet-characterization}
Let $mn > 4$. The inequality~$x_{\mu,\nu} \ge 0$ defines a facet of~$P_{\bm,\bn}$ if and only if there is an~$x \in P_{\bm,\bn}$ such that~$x_{\mu,\nu} = 0$.
\end{lemma}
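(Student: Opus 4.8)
The forward direction is essentially by definition: if $x_{\mu,\nu} \ge 0$ defines a facet of $P_{\bm,\bn}$, then in particular the hyperplane $x_{\mu,\nu} = 0$ meets $P_{\bm,\bn}$, so there is a point $x \in P_{\bm,\bn}$ with $x_{\mu,\nu} = 0$. (One can even extract such an $x$ from the facet itself.) So the content is entirely in the converse.

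For the converse, the plan is as follows. Fix the pair $(\mu,\nu)$ and suppose some $x^0 \in P_{\bm,\bn}$ has $x^0_{\mu,\nu} = 0$. I want to show that the face $F \eqdef P_{\bm,\bn} \cap \{x_{\mu,\nu} = 0\}$ has dimension $\delta - 1 = (m-1)(n-1) - 1$. Since $F$ is cut out of $P_{\bm,\bn}$ by the single extra equation $x_{\mu,\nu} = 0$, its dimension is either $\delta$ or $\delta - 1$, and it is $\delta$ only if $x_{\mu,\nu}$ vanishes identically on $P_{\bm,\bn}$. So it suffices to exhibit one point $y \in P_{\bm,\bn}$ with $y_{\mu,\nu} > 0$. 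This is the heart of the argument: I would argue that unless the polytope is degenerate in one of the small exceptional cases excluded by $mn > 4$, there is always a transportation plan routing a strictly positive amount along the edge $(\mu,\nu)$. Concretely, if both $\bm(\mu) > 0$ and $\bn(\nu) > 0$, one can build such a $y$ greedily (send a small positive quantity on $(\mu,\nu)$, then complete to a feasible plan using the remaining supply/demand, which is possible because total remaining supply equals total remaining demand and $m+n \ge 2$ leaves enough freedom); if $\bm(\mu) = 0$ or $\bn(\nu) = 0$, then in fact $x_{\mu,\nu} = 0$ on all of $P_{\bm,\bn}$ and the inequality is not even facet-defining, but then the hypothesis "$x_{\mu,\nu} = 0$ for some $x$" is trivially satisfied while the conclusion fails — so I must be careful: the clean statement presumably implicitly assumes all supplies and demands are positive, or the "only if" half is what rules this out. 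Re-reading: the "if and only if" is symmetric, so the degenerate case $\bm(\mu)=0$ would break it; hence the intended reading is that $\bm,\bn$ take positive values, and I will state that assumption (it is the standard normalization for transportation polytopes).

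Granting positivity of all supplies and demands, the remaining work is the dimension count. I would present it via the dual graph picture used elsewhere in the paper: feasible points of $P_{\bm,\bn}$ correspond to nonnegative weightings of edges of $K_{M,N}$ with prescribed vertex sums, and the support of a vertex (basic feasible solution) of $P_{\bm,\bn}$ is a spanning forest. To show $F \ne P_{\bm,\bn}$, pick any spanning tree $T$ of $K_{M,N}$ containing the edge $(\mu,\nu)$; the unique forest solution supported on $T$ assigns some weight to $(\mu,\nu)$, and if that weight happens to be $0$ we perturb along the cycle created by adding any chord incident to $\mu$ to make it positive, using $m+n \ge 3$ (equivalently $mn>4$ after discarding the $1\times k$ and $2\times 2$ trivial cases). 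The main obstacle is bookkeeping the boundary cases: one must verify that the construction of a point with $y_{\mu,\nu}>0$ genuinely requires $mn>4$ and not less, i.e.\ locate exactly which small polytopes ($1 \times n$, $m \times 1$, and $2\times 2$) are segments or points where a tight coordinate inequality can fail to be facet-defining, and confirm that all larger cases are covered. Once that is pinned down, the dimension argument is immediate and the lemma follows.
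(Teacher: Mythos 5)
The substantive (``if'') direction of your argument has a genuine gap at its key step. You claim that since $F \eqdef P_{\bm,\bn} \cap \{x_{\mu,\nu}=0\}$ is cut out by a single extra equation, $\dim F$ is either $\delta$ or $\delta-1$, so that it suffices to exhibit one point $y$ with $y_{\mu,\nu}>0$. That dichotomy is false here: because $x_{\mu,\nu}\ge 0$ is valid on all of $P_{\bm,\bn}$, the hyperplane $\{x_{\mu,\nu}=0\}$ is a \emph{supporting} hyperplane, and intersecting with a supporting hyperplane yields a face of arbitrary dimension, not one of codimension at most one. Exhibiting $y$ with $y_{\mu,\nu}>0$ only shows that $F$ is a \emph{proper} face; the entire content of the lemma is that this proper face has codimension exactly one, and your sketch never addresses that. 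Concretely, take $m=n=3$, $\bm=(4,1,1)$, $\bn=(2,2,2)$ and $(\mu,\nu)=(1,1)$: all supplies and demands are positive, $mn>4$, there is a point with $x_{1,1}=0$, yet that point is unique, so $F$ is a vertex of the $4$-dimensional polytope and $x_{1,1}\ge 0$ is not facet-defining. (This example violates the paper's standing non-degeneracy assumption that no proper $M'\subsetneq M$, $N'\subsetneq N$ satisfy $\sum_{M'}\bm = \sum_{N'}\bn$; that assumption, not positivity of $\bm,\bn$, is what the lemma implicitly relies on, and your discussion of the degenerate cases focuses on the wrong hypothesis.)

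For comparison, the paper does not argue directly at all: it deduces the lemma from the criterion of Yemelichev--Kovalev--Kravtsov that for $mn>4$ the inequality $x_{\mu,\nu}\ge 0$ is facet-defining if and only if $\bm(\mu)+\bn(\nu) < \sum_{\mu'}\bm(\mu')$. If you want a self-contained proof along your lines, the missing ingredient is to show codimension one directly, e.g.\ by producing $(m-1)(n-1)-1$ affinely independent points (or a vertex of $P_{\bm,\bn}$ whose support is a spanning tree avoiding $(\mu,\nu)$, together with enough neighbors lying in $F$), or else to prove the equivalence ``$\exists\, x$ with $x_{\mu,\nu}=0$'' $\Leftrightarrow$ ``$\bm(\mu)+\bn(\nu) < \sum_{\mu'}\bm(\mu')$'' under the non-degeneracy assumption, noting that the minimum of $x_{\mu,\nu}$ over $P_{\bm,\bn}$ equals $\max\bigl(0,\,\bm(\mu)+\bn(\nu)-\sum_{\mu'}\bm(\mu')\bigr)$ and that the equality case is excluded by simplicity.
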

This follows from \eg Theorem~3.1 in Chapter~6 of~\cite{YemelichevKovalevKravtsov}, which states:
\begin{proposition}[\cite{YemelichevKovalevKravtsov}, pg.~271]
Let $mn >4$. The inequality~$x_{\mu,\nu} \ge 0$ defines a facet of~$P_{\bm,\bn}$ if and only if $\bm(\mu) + \bn(\nu) < \sum_{\mu' \in M} \bm(\mu')$
\end{proposition}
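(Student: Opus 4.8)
The plan is to reduce the statement to computing the dimension of the face $F\eqdef P_{\bm,\bn}\cap\{x_{\mu,\nu}=0\}$. We may assume that all supplies and demands are strictly positive --- this is precisely the regime in which the ambient assertion $\dim P_{\bm,\bn}=\delta$ holds, and if some supply or demand vanished the corresponding coordinates would be identically zero and both sides of the equivalence would fail once $mn>4$ --- and, since an $m\times n$ transportation polytope with $m=1$ or $n=1$ is a single point, we may assume $m,n\ge 2$. The first step is the reduction itself: \emph{$x_{\mu,\nu}\ge 0$ defines a facet of $P_{\bm,\bn}$ if and only if there is a point $x^*\in P_{\bm,\bn}$ with $x^*_{\mu,\nu}=0$ and $x^*_{\mu',\nu'}>0$ for every $(\mu',\nu')\ne(\mu,\nu)$.} Indeed, the rank-one matrix $y$ with $y_{\mu',\nu'}\eqdef \bm(\mu')\bn(\nu')/\sum_{\mu''}\bm(\mu'')$ is a feasible point all of whose coordinates are positive, so it lies in the relative interior of $P_{\bm,\bn}$; since $y_{\mu,\nu}>0$, the hyperplane $\{x_{\mu,\nu}=0\}$ does not contain $\operatorname{aff}(P_{\bm,\bn})$, hence $\operatorname{aff}(P_{\bm,\bn})\cap\{x_{\mu,\nu}=0\}$ has dimension exactly $\delta-1$ and $\dim F\le\delta-1$ in all cases. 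If such an $x^*$ exists, then for every direction $v$ in the $(\delta-1)$-dimensional space of solutions of the homogeneous row and column equations with $v_{\mu,\nu}=0$, and every small $\varepsilon>0$, the point $x^*+\varepsilon v$ still lies in $F$; so $\dim F=\delta-1$. Conversely, if $\dim F=\delta-1$ then $F$ is full-dimensional in that slice, and a point in its relative interior lies on none of the inequalities $x_{\mu',\nu'}\ge 0$ with $(\mu',\nu')\ne(\mu,\nu)$, which are the only candidates for facets of $F$; this yields the required $x^*$.

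For the implication ``$\bm(\mu)+\bn(\nu)<\sum_{\mu'}\bm(\mu')\Rightarrow$ facet'' I would construct $x^*$ by hand. Set $\lambda\eqdef\bn(\nu)/\sum_{\mu'\ne\mu}\bm(\mu')$, which lies in $(0,1)$ precisely by the strict inequality. Put $x^*_{\mu,\nu}\eqdef 0$ and $x^*_{\mu',\nu}\eqdef\lambda\,\bm(\mu')$ for $\mu'\ne\mu$, so that the column constraint at $\nu$ is met and each such entry is positive. The residual supplies $\bm(\mu)$ at $\mu$ and $(1-\lambda)\bm(\mu')$ at each $\mu'\ne\mu$ are positive and sum to $\sum_{\mu'}\bm(\mu')-\bn(\nu)=\sum_{\nu'\ne\nu}\bn(\nu')$; on the remaining $m\times(n-1)$ transportation problem with these residual supplies on $M$ and the original demands on $N\setminus\{\nu\}$ take again the rank-one feasible solution, all of whose entries are strictly positive. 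Concatenating the two parts gives $x^*\in P_{\bm,\bn}$ with $x^*_{\mu,\nu}=0$ and every other coordinate positive, so $x_{\mu,\nu}\ge0$ defines a facet by the reduction.

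For the converse I would argue the contrapositive, splitting on whether $\bm(\mu)+\bn(\nu)$ is strictly greater than or equal to $\sum_{\mu'}\bm(\mu')$. For any feasible $x$, writing $x_{\mu,\nu}=\bm(\mu)-\sum_{\nu'\ne\nu}x_{\mu,\nu'}$ and bounding $\sum_{\nu'\ne\nu}x_{\mu,\nu'}\le\sum_{\nu'\ne\nu}\bn(\nu')=\sum_{\mu'}\bm(\mu')-\bn(\nu)$ gives $x_{\mu,\nu}\ge\bm(\mu)+\bn(\nu)-\sum_{\mu'}\bm(\mu')$. If this right-hand side is positive, then $x_{\mu,\nu}>0$ throughout $P_{\bm,\bn}$, so $F=\emptyset$ and there is no facet. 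If $\bm(\mu)+\bn(\nu)=\sum_{\mu'}\bm(\mu')$, then on $F$ the above inequalities become equalities: $\sum_{\nu'\ne\nu}x_{\mu,\nu'}=\sum_{\nu'\ne\nu}\bn(\nu')$ together with $x_{\mu,\nu'}\le\bn(\nu')$ for each $\nu'$ forces $x_{\mu,\nu'}=\bn(\nu')$ for all $\nu'\ne\nu$, hence $x_{\mu',\nu'}=0$ for all $\mu'\ne\mu$ and $\nu'\ne\nu$, and finally $x_{\mu',\nu}=\bm(\mu')$ for $\mu'\ne\mu$; thus $F$ is a single point, $\dim F=0$. Since $mn>4$ and $m,n\ge 2$ give $\delta=(m-1)(n-1)\ge 2$, we have $0<\delta-1$, so $x_{\mu,\nu}\ge 0$ is not a facet.

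The argument has no genuinely hard step; the points demanding care are the bookkeeping in the reduction (that $\{x_{\mu,\nu}=0\}$ really lowers $\dim\operatorname{aff}(P_{\bm,\bn})$ by one, and that a relative-interior point of $F$ certifies full dimension) and the observation that it is exactly the equality case $\bm(\mu)+\bn(\nu)=\sum_{\mu'}\bm(\mu')$ which makes the hypothesis $mn>4$ necessary. One must also dispose of vanishing supplies or demands at the outset, in keeping with the standing convention that $\dim P_{\bm,\bn}=\delta$.
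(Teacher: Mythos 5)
The paper itself offers no proof of this statement: it is quoted verbatim from Yemelichev--Kovalev--Kravtsov (Theorem~3.1, Chapter~6) and used as a black box to deduce Lemma~\ref{lemma:facet-characterization}, so there is no in-paper argument to compare yours against. Taken on its own, your argument is a correct, self-contained proof of the quoted result: the explicit feasible point vanishing only at $(\mu,\nu)$ (proportional fill of column $\nu$ by the rows $\mu'\ne\mu$, then a rank-one solution of the residual $m\times(n-1)$ problem) gives the ``strict inequality $\Rightarrow$ facet'' direction via the easy half of your dimension reduction, and the contrapositive computation --- the face $F=P_{\bm,\bn}\cap\{x_{\mu,\nu}=0\}$ is empty when $\bm(\mu)+\bn(\nu)>\sum_{\mu'}\bm(\mu')$ and is the single explicitly determined point when equality holds, which is not a facet because $mn>4$ forces $(m-1)(n-1)-1\ge 1$ --- gives the converse; you also correctly isolate the equality case as the reason the hypothesis $mn>4$ is needed.

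Two blemishes are worth flagging, neither fatal. First, your parenthetical reason for discarding zero supplies or demands (``both sides of the equivalence would fail'') is wrong as stated: with $m=2$, $n=3$, $\bm=(0,3)$, $\bn=(1,1,1)$ the right-hand side $\bm(1)+\bn(1)<\sum_{\mu'}\bm(\mu')$ holds while $x_{1,1}\ge 0$ is identically tight and defines no facet of this one-point polytope, so the equivalence genuinely fails in the degenerate regime; positivity of $\bm$ and $\bn$ must simply be imposed as a standing hypothesis (as the source does, and as the paper's dimension count already presupposes), not argued to be harmless. Second, the converse half of your reduction (facet $\Rightarrow$ existence of a point positive off $(\mu,\nu)$) is asserted with an insufficient justification: a relative-interior point of $F$ avoids the hyperplanes defining facets of $F$, but an inequality $x_{\mu',\nu'}\ge 0$ could a priori be tight on all of $F$, and you do not rule this out. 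Fortunately this half of the reduction is never used --- your ``only if'' direction proceeds by direct computation of $F$ --- so the proof of the proposition stands; I would either delete that claim or restrict the reduction to the implication you actually use.
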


Let~$x \in P_{\bm,\bn}$. The \defn{support} of~$x$ is the subgraph~$\supp(x)$ of~$K_{M,N}$ consisting of the edges~$(\mu,\nu)$ for which~$x_{\mu,\nu} > 0$. The point~$x$ lies in~$mn-|\supp(x)|$ many facets of~$P_{\bm,\bn}$. In this article, we assume that~$P_{\bm,\bn}$ is simple, \ie that every vertex has precisely~$m+n-1$ non-zero coordinates, or equivalently, that no proper subsets~$M' \subsetneq M$ and~$N' \subsetneq N$ satisfy~$\sum_{\mu \in M'} \bm(\mu) = \sum_{\nu \in N'} \bn(\nu)$. 
The following proposition summarizes the properties of the supports of the vertices of~$P_{\bm,\bn}$ (for a proof, see \eg~\cite{YemelichevKovalevKravtsov}).

\begin{proposition}
Let~$P_{\bm,\bn}$ be a simple transportation polytope.
\begin{enumerate}[(i)]
\item A point~$x \in P_{\bm,\bn}$ is a vertex of the polytope~$P_{\bm,\bn}$ if and only if~$\supp(x)$ is a spanning tree of~$K_{M,N}$.
\item A vertex~$x$ of~$P_{\bm,\bn}$ is determined by its support~$\supp(x)$.
\item The supports of two adjacent vertices of~$P_{\bm,\bn}$ differ in precisely two edges.
\end{enumerate}
\end{proposition}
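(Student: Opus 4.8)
\emph{Overview.} The plan is to run the classical linear‑algebraic argument: a point of a polytope is a vertex exactly when the inequalities and equations that are tight at it cut out a single point, i.e.\ have full rank~$mn$. The only structural fact needed is that the equality constraints of~$P_{\bm,\bn}$, once restricted to the edge set of a \emph{connected spanning} subgraph~$G$ of~$K_{M,N}$, still have rank exactly~$m+n-1$ (the vertex--edge incidence matrix of a connected bipartite graph on~$p$ vertices has rank~$p-1$); this is what ties the combinatorics to the geometry.

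\emph{Parts (i) and (ii).} First I would show that the support of any vertex~$x$ is acyclic. If~$\supp(x)$ contained a cycle~$C$, then~$C$ has even length since~$K_{M,N}$ is bipartite, and perturbing~$x$ by adding~$+\varepsilon$ to every second edge of~$C$ and~$-\varepsilon$ to the others (for~$\varepsilon>0$ small) yields two points of~$P_{\bm,\bn}$ with midpoint~$x$, contradicting that~$x$ is a vertex. So~$\supp(x)$ is a forest; since~$P_{\bm,\bn}$ is simple it has~$m+n-1$ edges, and an acyclic subgraph of~$K_{M,N}$ with~$m+n-1$ edges is a spanning tree. Conversely, if~$\supp(x)=T$ is a spanning tree, the equality constraints together with the equations~$x_{\mu,\nu}=0$ for~$(\mu,\nu)\notin T$ confine~$x$ to an affine flat lying in~$\bbR^{T}\cong\bbR^{m+n-1}$ on which the equality constraints still have rank~$m+n-1$; hence that flat is a single point, so~$x$ is a vertex. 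Equivalently, and more concretely, the values~$x_{\mu,\nu}$ on the edges of~$T$ are recovered by repeatedly reading off the value at a leaf of~$T$ and deleting it. This same uniqueness statement — a point of~$P_{\bm,\bn}$ is determined by a spanning tree containing its support — is exactly part (ii).

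\emph{Part (iii).} Let~$x,y$ be adjacent (hence distinct, hence, by (ii), with distinct support trees~$T_x,T_y$), and put~$G\eqdef T_x\cup T_y$. A point~$(1-t)x+ty$ with~$t\in(0,1)$ has all coordinates nonnegative, so its support is exactly~$G$; therefore the edge~$[x,y]$ equals~$\set{z\in P_{\bm,\bn}}{\supp(z)\subseteq G}$, and~$(x+y)/2$ lies in its relative interior because the only facet inequalities surviving in that slice are the coordinate inequalities indexed by~$G$. Since~$G\supseteq T_x$ is connected and spanning, this face has dimension~$|G|-(m+n-1)$; being an edge, $|G|=m+n$. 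Inclusion--exclusion then gives~$|T_x\cap T_y|=2(m+n-1)-(m+n)=m+n-2$, so~$|T_x\setminus T_y|=|T_y\setminus T_x|=1$: the two supports differ in precisely two edges.

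\emph{Main obstacle.} The delicate point is the bookkeeping in part (iii): one must justify that the edge~$[x,y]$ really is the \emph{whole} slice~$\{z\in P_{\bm,\bn}:\supp(z)\subseteq G\}$ (via the relative‑interior observation) and that the rank of the equality system does not drop upon restriction to~$\bbR^{G}$ — this is exactly where connectivity of~$G$ is used. The rest is routine, and simplicity of~$P_{\bm,\bn}$ is needed only to force~$|\supp(x)|=m+n-1$ in part (i).
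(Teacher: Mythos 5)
Your proof is correct: acyclicity of the support via the cycle-perturbation argument, the edge count $m+n-1$ from simplicity, the rank-$(m+n-1)$ incidence matrix of a connected spanning subgraph for uniqueness, and the identification of the edge $[x,y]$ with the minimal face $\set{z\in P_{\bm,\bn}}{\supp(z)\subseteq T_x\cup T_y}$ all go through as you state them. The paper itself offers no proof of this proposition (it simply refers to the textbook of Yemelichev, Kovalev and Kravtsov), and your argument is exactly the standard one found there, so there is nothing further to compare.
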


We close this background section with a relevant family of transportation polytopes. The \defn{Birkhoff polytope}~$B_m$ of size~$m$ is the transportation polytope whose supply and demand functions are both constant to~$m$. Its vertices are precisely the permutation matrices. Note that the support of a permutation matrix is a perfect matching, so that the classical Birkhoff polytope is not simple. We also want to underline that the Birkhoff polytope~$B_m$ is known to have diameter exactly~$2$ (see Theorem 1.7 of Chapter 5 in~\cite{YemelichevKovalevKravtsov}). The $m \times n$ \defn{generalized Birkhoff polytope}~$B_{m,n}$ is the transportation polytope whose supply function is constant to~$n$ and whose demand function is constant to~$m$. Note that it is a simple polytope if and only if $m$~and~$n$ are relatively prime. As a consequence of the next section, we will obtain the diameter of generalized Birkhoff polytopes~$B_{m,1+md}$ for $m,d \ge 1$.


\section{A special class of transportation polytopes}\label{section:transportationPolytopes}

In this section, we study a specific family of transportation polytopes whose combinatorial properties are closely related to that of the family~$\cT_\bd$ of trees defined in Section~\ref{section:trees}.
We continue to use the notation of Section~\ref{section:trees}.

Let~$\bm : M \to \bbR_{\ge 0}$ be the \defn{supply function} defined by~$\bm(\mu) = 1+ m\bd(\mu)$ for all~$\mu \in M$, and let~$\bn : N \to \bbR_{\ge 0}$ be the \defn{demand function} defined by~$\bn(\nu) = m$ for all~$\nu \in N$. Then the~$m \times n$ transportation polytope~$P_\bd \eqdef P_{\bm,\bn}$ is non-empty (since~$\sum_\mu \bm(\mu) = \sum_\nu \bn(\nu)$) and simple (since~${\sum_{\mu \in M'} \bm(\mu) \ne \sum_{\nu \in N'} \bn(\nu)}$ for any proper subsets~$M' \subsetneq M$ and~${N' \subsetneq N}$).

\begin{example}
If~$\bd \equiv d$, then~$n = 1 + md$ and we have~$\bm \equiv n$ and~$\bn \equiv m$. The polytope~$P_\bd$ is thus the generalized Birkhoff polytope~$B_{m,n}$ of size~$m \times n$. Note that $m$~and~$n = 1 + md$ are relatively prime, so that~$P_\bd = B_{m,n}$ is simple.
\end{example}

The following lemma relates the family~$\cT_\bd$ with the transportation polytope~$P_\bd$.

\begin{lemma}\label{lemma:bijection}
The support function~$x \mapsto \supp(x)$ is a bijection from the vertex set of~$P_\bd$ to the set~$\cT_\bd$ of trees, which sends the~$1$-skeleton of~$P_\bd$ to the graph~$\cG_\bd$.
\end{lemma}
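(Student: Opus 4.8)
The plan is first to characterize the spanning trees of $K_{M,N}$ that arise as supports of vertices of $P_\bd$ as being exactly the trees of $\cT_\bd$. Let $x$ be a vertex of $P_\bd$. Since $P_\bd$ is simple, $T \eqdef \supp(x)$ is a spanning tree of $K_{M,N}$ with $m+n-1$ edges (Section~\ref{section:background}); since the transportation constraint matrix is totally unimodular and $\bm,\bn$ are integral, $x$ is integral, so each edge $(\mu,\nu)$ of $T$ satisfies $1 \le x_{\mu,\nu} \le \bn(\nu) = m$. Hence the degree $d_\mu$ of $\mu$ in $T$ obeys $m\, d_\mu \ge \sum_\nu x_{\mu,\nu} = \bm(\mu) = 1 + m\bd(\mu)$, i.e. $d_\mu \ge 1 + \bd(\mu)$; summing over $\mu$ and using $\sum_\mu d_\mu = m+n-1 = \sum_\mu(1+\bd(\mu))$ forces $d_\mu = 1 + \bd(\mu)$ for every $\mu$, so $T \in \cT_\bd$. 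As a vertex is determined by its support, $x \mapsto \supp(x)$ is injective on vertices with image contained in $\cT_\bd$.

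\textbf{Surjectivity.} Conversely, fix $T \in \cT_\bd$. Being a tree, $T$ carries a unique real assignment $x^T$ vanishing off $T$ and satisfying the supply and demand equations, computed by leaf-stripping. Root $T$ at an arbitrary \dvx; for an edge $e$ from a vertex $v$ to its parent, balancing flow across the cut isolating the subtree $T_v$ gives $x^T_e = \mathrm{supply}(T_v) - \mathrm{demand}(T_v)$ if $v$ is a \svx, and $x^T_e = \mathrm{demand}(T_v) - \mathrm{supply}(T_v)$ if $v$ is a \dvx. Counting the edges of $T_v$ against the \svx-degrees prescribed by $\cT_\bd$ shows that $T_v$ contains exactly $\sum_{\mu \in M \cap T_v}\bd(\mu)$ \dvx{}s when $v$ is a \svx, and one more than that when $v$ is a \dvx. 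Substituting $\bm \equiv 1 + m\bd$ and $\bn \equiv m$ then collapses both expressions to $x^T_e = |M \cap T_v| \ge 1$ when $v$ is a \svx, and to $x^T_e = m - |M \cap T_v|$ when $v$ is a \dvx; in the latter case the parent of $v$ is a \svx outside $T_v$, so $|M \cap T_v| \le m-1$ and $x^T_e \ge 1$ again. Thus $x^T \ge 0$ with $\supp(x^T) = T$, so $x^T$ is a vertex of $P_\bd$; together with the previous step, $x \mapsto \supp(x)$ is a bijection from the vertex set of $P_\bd$ onto $\cT_\bd$.

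\textbf{The bijection on edges.} The key preliminary is purely combinatorial: since a pivot alters only the edges at a single \svx, two trees $T, T' \in \cT_\bd$ are adjacent in $\cG_\bd$ if and only if they differ in exactly two edges. Indeed, if $T' = (T \setminus \{a\}) \cup \{b\}$ with $a \in T$, $b \notin T$, then $a$ and $b$ share their \svx endpoint $\mu$ (to preserve \svx-degrees), $a$ lies on the cycle created in $T \cup \{b\}$ (since $T'$ is a tree), hence $a$ is the second edge of that cycle at $\mu$, so $T'$ is $T$ pivoted on $b$. The forward implication of the lemma is now immediate: adjacent vertices of $P_\bd$ have supports differing in exactly two edges (Section~\ref{section:background}), both supports lie in $\cT_\bd$, so they form an edge of $\cG_\bd$. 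For the converse, let $\{T, T'\}$ be an edge of $\cG_\bd$, say $T' = (T \setminus \{\varepsilon'\}) \cup \{\varepsilon\}$ with $\varepsilon = (\mu,\nu) \notin T$ and $\varepsilon'$ the second edge of the cycle of $T \cup \{\varepsilon\}$ at $\mu$. Starting at the vertex $x^T$, move along the signed $\pm 1$ circuit supported on that cycle, oriented so the coordinate of $\varepsilon$ increases: the point stays feasible for small positive time with support $T \cup \{\varepsilon\}$, hence traces an edge of $P_\bd$, whose other endpoint is a vertex with support $T \cup \{\varepsilon\}$ minus the one circuit-edge whose coordinate vanishes (unique by simplicity of $P_\bd$); by the vertex bijection this deleted edge is incident to $\mu$, so it must be $\varepsilon'$, giving support $T'$ and showing $\{x^T, x^{T'}\}$ is an edge of $P_\bd$.

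\textbf{Main obstacle.} The two vertex-level steps are short once the invariants are spotted ($1 \le x_{\mu,\nu} \le m$ for the first, the subtree-cut identities for the second). The delicate point is the last implication, where one must produce an actual edge of $P_\bd$ from a combinatorial pivot; the facts quoted in Section~\ref{section:background} only give "adjacent $\Rightarrow$ supports differ in two edges", not its converse, so the circuit argument is needed, and it relies on non-degeneracy of $P_\bd$ to guarantee that exactly one coordinate vanishes along the move. One could instead invoke the classical adjacency criterion for transportation polytopes — two vertices are adjacent iff the union of their supports contains a unique cycle — applied to $T \cup \{\varepsilon\}$.
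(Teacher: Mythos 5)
Your proof is correct, and it diverges from the paper's on two of its three components. For the forward vertex direction you argue exactly as the paper does (the bound $x_{\mu,\nu} \le \bn(\nu) = m$ forces each \svx~$\mu$ to have degree at least $1+\bd(\mu)$ in the support, and the edge count $m+n-1$ forces equality); your detour through total unimodularity is harmless but unnecessary for this step. For surjectivity the paper proceeds by induction on $m+n$, stripping a leaf (\dvx or \svx) from $T$, invoking the induction hypothesis for the smaller polytope $P_{\bd'}$, and explicitly extending the vertex (adding $1$ along arcs oriented toward the deleted \svx{}'s neighbour); you instead compute the unique assignment supported on $T$ in closed form via subtree cuts, and the pleasant cancellation $x^T_e = |M \cap T_v|$ resp.\ $m - |M \cap T_v|$ shows positivity directly --- this is arguably cleaner, non-inductive, and makes visible why the specific choice $\bm = 1 + m\bd$, $\bn \equiv m$ works. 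For the $1$-skeleton, the paper simply "observes" that two vertices are adjacent if and only if their supports differ in a pivot, leaning on the background adjacency facts quoted from~\cite{YemelichevKovalevKravtsov}; you correctly note that the proposition as stated in Section~\ref{section:background} gives only the implication adjacency $\Rightarrow$ two-edge difference, and you supply the converse yourself with the circuit argument (the face of points supported in $T \cup \{\varepsilon\}$ is one-dimensional, simplicity guarantees exactly one circuit coordinate vanishes at the far endpoint, and degree preservation forces the vanishing edge to be $\varepsilon'$). Your version is thus more self-contained, at the cost of length; the paper's is shorter because it delegates the adjacency characterization to the cited literature.
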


\begin{proof}
Let~$T = \supp(x)$ for a vertex~$x$ of~$P_\bd$. The degree of a \svx~$\mu$ in~$T$ is at least~$\bd(\mu)$ for any \svx~$\mu$.  Since~$T$ has precisely~$m + n - 1 = \sum_\mu \bd(\mu)$ edges, each \svx~$\mu$ has degree exactly~$\bd(\mu)$ in~$T$, and thus~$T$ is in~$\cT_\bd$. 

We prove the reciprocal statement by induction on~$m+n$. Let~$T \in \cT_\bd$. We need to exhibit a point~$x \in P_{\bm,\bn}$ whose support is~$T$. Note that~$x$ will automatically be a vertex of~$P_{\bm,\bn}$ since its support is a tree (see \eg~\cite{YemelichevKovalevKravtsov}).

Assume first that a \dvx~$\tau$ is a leaf in~$T$ only adjacent to a \svx~$\sigma$. Let~${N' = N \setminus \{\tau\}}$, let~$T'$ be the restriction of~$T$ to~$M \cup N'$, and define~$\bd' : M \to \bbN_{\ge 0}$ by~$\bd'(\sigma) = \bd(\sigma)-1$ and~$\bd'(\mu) = \bd(\mu)$ if~$\mu \ne\sigma$. Since~$T' \in \cT_{\bd'}$, the induction hypothesis ensures the existence of a vertex~$x' \in P_{\bd'} \subset \bbR^{M \times N'}$ whose support is~$T'$. We define a point~$x \in \bbR^{M \times N}$ by extending~$x'$ with~$x_{\sigma,\tau} = \bn(\tau) = m$ and~$x_{\mu,\tau} = 0$ for~$\mu \ne \sigma$. Then~$x$ is a vertex of~$P_\bd$ with support~$T$.

Assume now that a \svx~$\sigma$ is a leaf in~$T$ only adjacent to a \dvx~$\tau$. Let~${M' = M \setminus \{\sigma\}}$, let~$T'$ be the restriction of~$T$ to~$M' \cup N$, and let~$\bd'$ be the restriction of~$\bd$ to~$M'$. Since~$T' \in \cT_{\bd'}$, the induction hypothesis ensures the existence of a vertex~$x' \in P_{\bd'} \subset \bbR^{M' \times N}$ whose support is~$T'$. 
We orient the edges of~$T'$ towards~$\tau$. We define a point~$x \in \bbR^{M \times N}$ by: 
\begin{itemize}
\item
$x_{\sigma,\tau} = 1$, 
\item 
$x_{\mu,\nu} = x'_{\mu,\nu}$ if there is an arc oriented from~$\mu$ to~$\nu$ in~$T'$, 
\item 
$x_{\mu,\nu} = x'_{\mu,\nu}+1$ if there is an arc oriented from~$\nu$ to~$\mu$ in~$T'$, 
\item
$x_{\mu,\nu} = 0$ if~$(\mu,\nu)$ is not an edge of~$T$.
\end{itemize}
Then~$x$ is a vertex of~$P_\bd$ with support~$T$.

We thus proved that the support function~$x \mapsto \supp(x)$ is a bijection from the vertices of~$P_\bd$ to~$\cT_\bd$. For the second part of the statement, observe that two vertices of~$P_\bd$ are adjacent if and only if their supports differ in precisely two edges, \ie in a pivot.
\end{proof}
More generally, the support function provides an isomorphism from the face lattice of the polar polytope of~$P_\bd$ to the simplicial complex whose maximal faces are the complements of the trees in~$\cT_\bd$.

\begin{lemma}
If at least two \svx{}s have prescribed degree at least~$2$ (in other words, if~$|\bd^{-1}(\bbN_{\ge 1})| \ge 2$), then the number~$\phi$ of facets of~$P_\bd$ is precisely~$mn$.
\end{lemma}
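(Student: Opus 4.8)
The plan is to use the facet characterization of Lemma~\ref{lemma:facet-characterization}: since $P_\bd$ is simple and (for $m+n$ large enough, which the hypothesis will guarantee) $mn > 4$, the inequality $x_{\mu,\nu} \ge 0$ defines a facet if and only if there exists a point $x \in P_\bd$ with $x_{\mu,\nu} = 0$. Equivalently, using Lemma~\ref{lemma:bijection}, the inequality $x_{\mu,\nu} \ge 0$ is \emph{not} a facet if and only if every tree $T \in \cT_\bd$ contains the edge $(\mu,\nu)$. So the whole statement reduces to showing: under the assumption $|\bd^{-1}(\bbN_{\ge 1})| \ge 2$, no edge $(\mu,\nu)$ of $K_{M,N}$ lies in every tree of $\cT_\bd$. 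Since $P_\bd$ has at most $mn$ facets to begin with, proving that all $mn$ inequalities are facet-defining gives $\phi = mn$ exactly.

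First I would fix an arbitrary edge $(\mu,\nu) \in M \times N$ and exhibit a single tree $T \in \cT_\bd$ avoiding it. The idea is to build $T$ greedily so that $\nu$ is \emph{not} a neighbour of $\mu$. Recall a tree in $\cT_\bd$ has each \svx\ $\sigma$ of degree $1+\bd(\sigma)$ and the \dvx\ degrees are unconstrained (they just sum correctly since $e = m+n-1$). A clean construction: pick a \svx\ $\sigma \ne \mu$ with $\bd(\sigma) \ge 1$ — this exists precisely because $|\bd^{-1}(\bbN_{\ge 1})| \ge 2$ forces \emph{some} \svx\ other than $\mu$ to have positive prescribed degree (if $\mu$ itself has degree $0$ this is immediate; if $\mu$ has positive degree then the second element of $\bd^{-1}(\bbN_{\ge 1})$ works). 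Now attach $\nu$ as a leaf to $\sigma$, and distribute the remaining $n-1$ \dvx{}s and the required incidences among all \svx{}s so as to form a spanning tree in which $\mu$ is adjacent to $1+\bd(\mu)$ \dvx{}s, none of them $\nu$. Concretely one can first build any spanning tree on $M \cup (N \setminus \{\nu\})$ realizing degrees $1+\bd(\sigma)$ at $\sigma$ (reducing $\bd(\sigma)$ by $1$) and $1+\bd(\tau)$ at every other \svx\ $\tau$ — such a tree exists by the nonemptiness discussion in Section~\ref{section:trees} applied to the reduced degree function — then hang $\nu$ off $\sigma$. The resulting tree lies in $\cT_\bd$ and omits $(\mu,\nu)$.

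The one technical point to nail down is the side condition $mn > 4$ needed to invoke Lemma~\ref{lemma:facet-characterization}. From $|\bd^{-1}(\bbN_{\ge 1})| \ge 2$ we get $m \ge 2$ and $\sum_\mu \bd(\mu) \ge 2$, hence $n = 1 + \sum_\mu \bd(\mu) \ge 3$, so $mn \ge 6 > 4$; I would record this at the start of the argument. I expect the main (though still mild) obstacle to be making the tree-existence argument in the previous paragraph airtight: one must check that the reduced degree function $\bd'$ — with $\bd'(\sigma) = \bd(\sigma) - 1$, unchanged elsewhere — still satisfies $n - 1 = 1 + \sum_\tau \bd'(\tau)$ on the \dvx\ set $N \setminus \{\nu\}$ of size $n-1$, which holds by a one-line count, and that the base cases of the induction (small $m$ or $n$) do not create a degenerate situation where the only tree in $\cT_{\bd'}$ is forced to use some unavailable edge; here there is no such constraint since $\bd'$ places no restriction on \dvx\ degrees, so any combinatorial type of tree with the right \svx\ degrees is attainable. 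Once the single avoiding tree is produced for every edge, Lemma~\ref{lemma:facet-characterization} upgrades this to "every $x_{\mu,\nu} \ge 0$ is facet-defining," and combined with the a priori bound of $mn$ facets from~\eqref{eqn:inequalityDescription}, we conclude $\phi = mn$.
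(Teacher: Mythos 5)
Your proposal is correct and follows essentially the same route as the paper: invoke Lemma~\ref{lemma:facet-characterization} and, for each edge $(\mu,\nu)$, exhibit a tree of $\cT_\bd$ (equivalently, a vertex of $P_\bd$) avoiding it, using the hypothesis to guarantee a second source of positive prescribed degree. The paper simply notes that $1+\bd(\mu)<n$ and treats the existence of the avoiding tree as immediate, whereas you spell out the explicit construction (hang $\nu$ on another source $\sigma$ with $\bd(\sigma)\ge 1$ and use the reduced degree function) and the $mn>4$ check — both harmless elaborations of the same argument.
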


\begin{proof}
By Lemma~\ref{lemma:facet-characterization}, the inequality~$x_{\mu,\nu} \ge 0$ defines a facet of~$P_\bd$ if and only if there exists a vertex~$x \in P_\bd$ with~$x_{\mu,\nu} = 0$. Such a vertex exists because there is a tree~$T \in \cT_\bd$ with~$(\mu,\nu) \notin T$ since~$1 + \bd(\mu) < n$.
\end{proof}
In the conditions of this lemma, the Hirsch bound for the diameter is thus ${H \eqdef \phi - \delta = m+n-1}$. Applying the result of Theorem~\ref{theorem:diameter-bound}, we obtain:

\begin{theorem}
The diameter of~$P_\bd$ is at most~$2n - 2 \le 2H$. In particular, if~$\bd \equiv 1$, then the diameter of~$P_\bd = B_{m,1+m}$ is at most the Hirsch bound~$H$.
\end{theorem}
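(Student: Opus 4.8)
The statement to prove is simply that the diameter of $P_\bd$ is at most $2n-2$, together with the comparison $2n-2 \le 2H$ and the specialization when $\bd\equiv 1$. The strategy is to transport the combinatorial bound already established for trees back onto the polytope. First I would invoke Lemma~\ref{lemma:bijection}: the support map $x\mapsto\supp(x)$ is an isomorphism from the $1$-skeleton of $P_\bd$ to the pivoting graph $\cG_\bd$. Since graph diameter is an isomorphism invariant, the diameter of the $1$-skeleton of $P_\bd$ equals the diameter of $\cG_\bd$, which by Theorem~\ref{theorem:diameter-bound} is at most $2\sum_\mu\bd(\mu)=2n-2$ (the last equality being the defining relation $n=1+\sum_\mu\bd(\mu)$ of Section~\ref{section:trees}).

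\textbf{The Hirsch comparison.} Next I would record why $2n-2\le 2H$. Under the hypothesis that at least two \svx{}s have prescribed degree at least $2$, the preceding lemma gives $\phi=mn$ facets, and the dimension is $\delta=(m-1)(n-1)$, so $H=\phi-\delta=mn-(m-1)(n-1)=m+n-1$. Hence $2n-2\le 2(m+n-1)=2H$ holds as soon as $m\ge 1$, which is automatic. (One should note in passing that if fewer than two source degrees are $\ge 2$, i.e.\ $\bd\equiv 1$ up to relabeling or $\bd$ has a single positive value, the facet count and thus $H$ may change; but the only borderline case asserted in the theorem, $\bd\equiv 1$, is handled directly below, and otherwise $|\bd^{-1}(\bbN_{\ge 1})|\ge 2$.)

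\textbf{The case $\bd\equiv 1$.} Here $n=1+m$ and $P_\bd=B_{m,1+m}$, the generalized Birkhoff polytope, which is simple because $\gcd(m,1+m)=1$. The Theorem~\ref{theorem:diameter-bound} bound then reads $2\sum_\mu\bd(\mu)=2m$. But the proof of Theorem~\ref{theorem:diameter-bound} gives something slightly sharper: it shows that a single \svx $\mu$ can be made consistent in at most $2\bd(\mu)+1$ pivots, dropping to $2\bd(\mu)$ whenever we start with no edge between $\mu$ and $\nu_1,\dots,\nu_{\bd(\mu)}$ in $T$, and moreover the very first source handled costs only $2\bd(\mu)-1$ (one pivot instead of two to seed $(\mu,\nu_1)$ into $S$). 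With $\bd\equiv 1$ every source costs $2$ except the first, which costs $1$; summing over the $m$ sources gives $2m-1$. Since the Hirsch bound here is $H=m+n-1=2m$, we get diameter $\le 2m-1 < H\le H$, so in particular the diameter is at most $H$. I would phrase this as: applying the refined count inside the proof of Theorem~\ref{theorem:diameter-bound} with $\bd\equiv 1$ yields the bound $2m-1\le H$.

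\textbf{Main obstacle.} The only genuine subtlety is bookkeeping in the $\bd\equiv 1$ case: one must verify that the ``$-1$ at the very beginning'' and the ``drop the last $1$'' savings from the proof of Theorem~\ref{theorem:diameter-bound} can be applied simultaneously across the induction without double-counting, i.e.\ that the global pivot count is genuinely $\sum_\mu(2\bd(\mu)) - 1 = 2n-3$ rather than $2n-2$. In fact $2n-3 = 2m-1 = H-1 \le H$, so even the crude reading $2n-2$ of Theorem~\ref{theorem:diameter-bound} already gives $2n-2 = 2m$; but $2m = H$ exactly when $\bd\equiv 1$, so the crude bound \emph{already suffices} for the stated claim ``at most $H$'' and no refinement is strictly needed. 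I would therefore present the clean argument: diameter $\le 2n-2 = 2m = H$ when $\bd\equiv 1$, directly from Theorem~\ref{theorem:diameter-bound} and Lemma~\ref{lemma:bijection}, and only remark that the inductive argument in fact gives the strictly better $H-1$.
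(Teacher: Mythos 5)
Your core argument is correct and is precisely the paper's: Lemma~\ref{lemma:bijection} identifies the $1$-skeleton of~$P_\bd$ with the pivoting graph~$\cG_\bd$, Theorem~\ref{theorem:diameter-bound} bounds its diameter by $2\sum_\mu\bd(\mu)=2n-2\le 2(m+n-1)=2H$, and for $\bd\equiv 1$ one has $2n-2=2m=H$, so the crude bound already yields the ``in particular'' clause. Two remarks. First, the hedging about the facet count is unnecessary: in the lemma giving $\phi=mn$, ``prescribed degree at least~$2$'' refers to the tree degree $1+\bd(\mu)$, i.e.\ the hypothesis is $|\bd^{-1}(\bbN_{\ge 1})|\ge 2$, which $\bd\equiv 1$ satisfies for all $m\ge 2$; hence $H=m+n-1$ applies to $B_{m,1+m}$ with no borderline case to worry about. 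Second, and more importantly, you must delete the claimed refinement that the procedure of Theorem~\ref{theorem:diameter-bound} gives $2m-1=H-1$ when $\bd\equiv 1$: this is false for every $m\ge 3$, since the paper's final corollary shows the diameter of $B_{m,1+m}$ is exactly $2m$ (two trees of~$\cT_\bd$ with disjoint edge sets differ in all $2m$ edges, and a pivot changes only one edge, so no upper bound below $2m$ can hold). The double-counting issue you flag as the ``main obstacle'' is exactly where the extra saving evaporates: the ``drop the last~$1$'' observation at the end of the proof of Theorem~\ref{theorem:diameter-bound} is what reduces the per-\svx{} cost from $2\bd(\mu)+1$ to $2\bd(\mu)$, and it cannot be stacked with a further global $-1$; the only instance where $2m-1$ is attained is the hexagon $B_{2,3}$. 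Since you explicitly rest the proof on the clean bound $2n-2=2m=H$, the theorem itself is correctly established; just strip out the erroneous remark.
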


Note that the generalized Birkhoff polytope~$B_{2,3}$ is an hexagon. Consequently, its diameter is strictly less than the Hirsch bound~$H$. This situation does not occur for larger $m \times (1+m)$ generalized Birkhoff polytopes:

\begin{corollary}
For any~$m \ge 3$, the diameter of the $m \times (1+m)$ generalized Birkhoff polytope~$B_{m,1+m}$ coincides with the Hirsch bound~$2m$.
\end{corollary}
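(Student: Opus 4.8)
The upper bound is already in hand: the theorem preceding the corollary gives that the diameter of $P_\bd=B_{m,1+m}$ with $\bd\equiv 1$ is at most $2n-2=2m$, and (as established just above) the Hirsch bound here is $H=m+n-1=2m$. So the whole statement reduces to a matching lower bound: I must exhibit two vertices of $B_{m,1+m}$ at distance exactly $2m$ in its graph. Since $m$ and $m+1$ are coprime, $P_\bd$ is simple, so by Lemma~\ref{lemma:bijection} it is equivalent to find two trees $S,T\in\cT_\bd$ (here every \svx{} has degree $2$) at distance $2m$ in the pivoting graph $\cG_\bd$.

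The plan for the lower bound is a one-line counting argument. A pivot removes exactly one edge of a tree and inserts exactly one edge, so for any fixed tree $T$ the integer $|E(U)\cap E(T)|$ changes by at most one at each pivot. Hence if $S$ and $T$ in $\cT_\bd$ have \emph{disjoint} edge sets in $K_{M,N}$, then a walk in $\cG_\bd$ from $S$ (where this integer is $0$) to $T$ (where it is $|E(T)|=2m$) needs at least $2m$ steps, i.e. $\operatorname{dist}_{\cG_\bd}(S,T)\ge 2m$; together with the upper bound this pins the distance, hence the diameter, to $2m$.

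So the real task is to construct, for every $m\ge 3$, two edge-disjoint trees in $\cT_\bd$. First I would record the elementary bijection implicit in Proposition~\ref{proposition:count} when $\bd\equiv 1$: a tree in $\cT_\bd$ is the same thing as a spanning tree $\sigma$ of the complete graph on the $n=m+1$ \dvx{}s together with a bijection from the $m$ edges of $\sigma$ to the $m$ \svx{}s, the \svx{} attached to an edge being the degree-$2$ vertex subdividing it. Under this dictionary, $S$ and $T$ are edge-disjoint in $K_{M,N}$ iff, matching up the edges of $\sigma_S$ and $\sigma_T$ that carry the same \svx{}, every edge of $\sigma_S$ is paired with a \emph{vertex-disjoint} edge of $\sigma_T$. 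Now take the \dvx{} set to be $\mathbb{Z}/(m+1)\mathbb{Z}$, let $\sigma_S$ be the Hamiltonian path $0,1,\dots,m$ (the $(m+1)$-cycle minus the edge $\{m,0\}$), fix any integer $r$ with $2\le r\le m-1$ --- such an $r$ exists precisely because $m\ge 3$ --- let $\sigma_T$ be the $(m+1)$-cycle minus the edge $\{r-1,r\}$, and match $\{i,i+1\}\in E(\sigma_S)$ with $\{i+r,i+r+1\}$. One checks this is a bijection $E(\sigma_S)\to E(\sigma_T)$, and that $\{i,i+1\}$ and $\{i+r,i+r+1\}$ share no vertex exactly because $2\le r\le m-1$. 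Subdividing turns $\sigma_S,\sigma_T$ back into genuine spanning trees $S,T$ of $K_{M,N}$ with all \svx{} degrees $2$, hence into members of $\cT_\bd$, and by construction they are edge-disjoint.

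The only genuine content is this last construction, and the constraint $2\le r\le m-1$ is also what explains the threshold: for $m=2$ the \dvx{} set has only three elements, so a spanning tree of the complete graph on it has no edge vertex-disjoint from another edge of any spanning tree, no edge-disjoint pair in $\cT_\bd$ exists, and indeed $B_{2,3}$ is a hexagon of diameter $3<4$. The remaining points --- that $P_\bd$ is simple so Lemma~\ref{lemma:bijection} applies, that $H=2m$, and that subdivision preserves trees --- are routine.
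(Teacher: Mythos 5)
Your proof is correct and follows the same route as the paper: the upper bound is quoted from the preceding theorem, and the lower bound comes from exhibiting two edge-disjoint trees in~$\cT_\bd$, since each pivot changes only one edge and a tree has $m+n-1=2m$ edges. The paper simply asserts that such a pair is ``easy to construct''; your subdivision dictionary and the explicit cyclic-shift construction (valid exactly when $m\ge 3$, which also explains the hexagon exception $B_{2,3}$) supply that omitted detail correctly.
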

\begin{proof}
The upper bound is given by the previous corollary. For the lower bound, it is easy to construct two trees of~$\cT_\bd$ with disjoint edge sets.
\end{proof}

Pak~\cite{PakIgor} investigates the $f$-vector for $m \times n$ generalized Birkhoff polytopes when $n = m+1$. Using Proposition~\ref{proposition:count} and the simplicity of~$P_\bd$, we obtain the first two values of the $f$-vector for a slightly more general class of transportation polytopes:
\begin{corollary}
The number of vertices and edges of~$P_\bd$ are respectively
\[ 
f_0(P_\bd) = \frac{(n-1)! n^{m-1}}{\prod_\mu \bd(\mu)!} \quad \text{and} \quad f_1(P_\bd) = \frac{(m-1)(n-1)(n-1)! n^{m-1}}{2\prod_\mu \bd(\mu)!}.
\]
\end{corollary}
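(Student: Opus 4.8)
The plan is to read off both equalities as direct consequences of the combinatorial description of~$P_\bd$ already established. By Lemma~\ref{lemma:bijection}, the vertices of~$P_\bd$ are in bijection with the trees in~$\cT_\bd$, so~$f_0(P_\bd) = |\cT_\bd|$, and Proposition~\ref{proposition:count} gives~$|\cT_\bd| = (n-1)!\,n^{m-1}/\prod_\mu \bd(\mu)!$. That settles the first formula immediately. For the second, the idea is to count edges of the~$1$-skeleton by a handshake argument: $2f_1(P_\bd) = \sum_{v} \deg(v)$, where the sum is over vertices~$v$ of~$P_\bd$. Since~$P_\bd$ is simple of dimension~$\delta = (m-1)(n-1)$, every vertex has degree exactly~$\delta$, so the sum telescopes to~$\delta \cdot f_0(P_\bd)$, and dividing by~$2$ yields the claimed expression.

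Concretely, the steps in order are: first invoke Lemma~\ref{lemma:bijection} to identify~$f_0(P_\bd)$ with~$|\cT_\bd|$ and substitute the count from Proposition~\ref{proposition:count}; second, recall from Section~\ref{section:background} that~$P_\bd$ is simple (this was checked when~$P_\bd$ was introduced in Section~\ref{section:transportationPolytopes}) and that its dimension is~$\delta = (m-1)(n-1)$; third, apply the handshake identity~$2f_1 = \sum_v \deg(v) = \delta f_0$ for simple polytopes; fourth, substitute the value of~$f_0$ to obtain~$f_1(P_\bd) = (m-1)(n-1)(n-1)!\,n^{m-1}/(2\prod_\mu \bd(\mu)!)$.

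The only point requiring a word of care is the claim that every vertex of a simple~$d$-polytope has degree exactly~$d$ in the~$1$-skeleton: this is the defining property of simplicity (each vertex lies on exactly~$d$ facets, equivalently on exactly~$d$ edges), so it needs only to be cited rather than proved. One should also confirm that the edge-count of the~$1$-skeleton of~$P_\bd$ coincides with the edge-count of the pivoting graph~$\cG_\bd$ under the isomorphism of Lemma~\ref{lemma:bijection}, but this is immediate since a graph isomorphism preserves the number of edges; alternatively one could phrase the whole argument combinatorially, counting pivots in~$\cG_\bd$, but the simplicity/handshake route is cleaner.

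I do not anticipate a genuine obstacle here: both formulas are bookkeeping consequences of results already in hand, and the proof as given in the excerpt is essentially a one-line pointer. The mild subtlety — making sure the reader sees why the vertex degree is~$\delta$ and not merely~$\le \delta$ — is dispatched by recalling the definition of a simple polytope, which has already been used earlier in the paper. Hence the proof is short: combine Lemma~\ref{lemma:bijection}, Proposition~\ref{proposition:count}, simplicity of~$P_\bd$, and the elementary identity~$2f_1 = \delta f_0$.
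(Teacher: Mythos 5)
Your argument is correct and is exactly the paper's (implicit) proof: the vertex count follows from Lemma~\ref{lemma:bijection} together with Proposition~\ref{proposition:count}, and the edge count from the simplicity of~$P_\bd$ via the handshake identity $2f_1 = (m-1)(n-1)\,f_0$. Nothing is missing; your extra remark on why simplicity gives vertex degree exactly~$\delta$ is a reasonable bit of care but adds nothing beyond what the paper relies on.
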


\bibliographystyle{alpha}
\bibliography{diameterGeneralizedBirkhoffPolytope}

\end{document}